\newtheorem{theorem}{Theorem}
\newtheorem{lemma}{Lemma}
\newtheorem{proposition}{Proposition}
\newtheorem{definition}{Definition}
\theoremstyle{remark}
\renewcommand{\phi}{\varphi}
\renewcommand{\epsilon}{\varepsilon}
\begin{document}
\title[On the stability of the existence of fixed points\dots]{On the stability of the existence of fixed points for the projection-iterative methods with relaxation}
\author{Andrzej Komisarski}
\address{Andrzej Komisarski\\
Department of Probability Theory and Statistics\\Faculty of Mathematics and Computer Science\\
University of \L\'od\'z\\ul.Banacha 22\\90-238 \L\'od\'z\\Poland}
\email{andkom@math.uni.lodz.pl}
\author{Adam Paszkiewicz}
\address{Adam Paszkiewicz\\
Department of Probability Theory and Statistics\\Faculty of Mathematics and Computer Science\\
University of \L\'od\'z\\ul.Banacha 22\\90-238 \L\'od\'z\\Poland}
\email{adampasz@math.uni.lodz.pl}
\subjclass[2010]{Primary 47H10; Secondary 47H09, 46C05, 52A15, 90C25}
\keywords{fixed points, projections on convex sets, iterative methods}
\thanks{This paper is partially supported by NCN Grant no. N N201 605840.}
\begin{abstract}
We consider an $\alpha$-relaxed projection $P_A^\alpha:H\to H$ given by $P_A^\alpha(x)=\alpha P_A(x)+(1-\alpha)x$
where $\alpha\in[0,1]$ and $P_A$ is the projection onto a non-empty, convex and closed subset $A$ of the real Hilbert space $H$.
We characterise all the sets $F\subset[0,1]$ such that 
for some non-empty, convex and closed subsets $A_1,A_2,\dots,A_k\subset H$
the composition $P_{A_k}^\alpha P_{A_{k-1}}^\alpha\dots P_{A_1}^\alpha$ has a fixed point iff $\alpha\in F$.
It proves, that if $\dim H\geq 3$ and $k\geq3$ then the class of the derscribed above sets $F$ of coefficients $\alpha$
is exactly the class of $F_\sigma$ subsets of $[0,1]$ containing $0$.
\end{abstract}

\maketitle

\footnote{Andrzej Komisarski, Faculty of Mathematics and Computer Science, University of \L\'od\'z, \texttt{andkom@math.uni.lodz.pl}}
\footnote{Adam Paszkiewicz, Faculty of Mathematics and Computer Science, University of \L\'od\'z, \texttt{adampasz@math.uni.lodz.pl}}

The theory of fixed points plays a great role in applications.
In particular, researchers investigated fixed points of compositions $P_{A_k}P_{A_{k-1}}\dots P_{A_1}$
of projections onto non-empty convex
subsets $A_1$,\dots, $A_k$ of the real Hilbert (or Euclidean) space. For example Bregman (\cite{Bregman})
finds points in the intersection $A_1\cap\dots\cap A_k$
using cyclic iterations of the form $x_{n+1}=P_{A_{i_n}}x_n$, where $(i_n)$ is the cyclic sequence $(1,2,\dots,k,1,2,\dots,k,\dots)$.
Bregman provides conditions which assure that the sequence $(x_n)$ converges (even in the case of other linear metric spaces).
The problem of the convergence of the iterative methods of this type is closely related to the existence of fixed points.
If $\dim H<\infty$ then the convergence of $(x_n)$ for every starting point is equivalent to the existence 
of a common fixed point of the projections $P_{A_i}$.

Moreover, if $\dim H<\infty$ and $P_{A_k}P_{A_{k-1}}\dots P_{A_1}$ has a fixed point then for every $x\in H$ the sequence
$((P_{A_k}P_{A_{k-1}}\dots P_{A_1})^nx)$ is convergent.
However, if $\dim H=\infty$ then the existence of a fixed point of the composition $P_{A_k}P_{A_{k-1}}\dots P_{A_1}$
does not imply the norm convergence of $((P_{A_k}P_{A_{k-1}}\dots P_{A_1})^nx)$ for every $x\in H$,
even if $A_1\cap\dots\cap A_k\neq\emptyset$ and $k=2$ (cf. remrkable examples in \cite{Hundal} and \cite{Kopecka}).
Despite these negative results the investigation of fixed points of compositions $P_kP_{k-1}\dots P_1$
is the natural first step in research of iterations $((P_kP_{k-1}\dots P_1)^nx)$,
where $P_1,\dots,P_k$ are generalisations of the projections $P_{A_1},\dots,P_{A_k}$.

One of possible generalisations of projections arise if we consider
the relaxation parameter which is commonly used in the iterative methods to control the
rate of the convrgence and the regularity of trajectories.
In the case of projections introducing the relaxation parameter $\alpha$ replaces
the projection $P_Ax=x+(P_Ax-x)$ with a map $x+\alpha(P_Ax-x)=\alpha P_A(x)+(1-\alpha)x$.
This leads to the following definition:

\begin{definition}
Let $H$ be a real Hilbert space, let $A\subset H$ be a non-empty, convex and closed subset of $H$
and let $\alpha\in[0,1]$. An \emph{$\alpha$-relaxed projection} (or \emph{$\alpha$-projection}) onto $A$
is the function $P_A^\alpha:H\to H$ given by
$$P_A^\alpha(x)=\alpha P_A(x)+(1-\alpha)x,$$
where by $P_A:H\to A$ we denote the projection onto $A$.
\end{definition}

In the paper we concentrate on $\alpha$-projections but other generalisations of projections had also been used.
For example De Pierro (\cite{DePierro}) consiered iterations of convex combinations of projections.

Recently, De Pierro and Cegielski (oral communication, \cite{Cegielski}) formulated the following interesting problem concerning fixed points:
Let $A_1,A_2,A_3$ be non-empty, convex and closed subsets of the Hilbert space $H$
and let $\alpha\in(0,1)$.
Is the existence of a fixed point of the composition $P_{A_3}P_{A_2}P_{A_1}$
equivalent to the existence of a fixed point of the composition $P_{A_3}^\alpha P_{A_2}^\alpha P_{A_1}^\alpha$?
The answer is negative. Moreover, we have the following general result:

\begin{theorem}\label{gtw}
Let $H$ be a Hilbert space, $\dim H\geq 3$, let $k\geq3$ be an integer and let $F\subset[0,1]$.
The following conditions are equivalent:
\begin{enumerate}
\item There exist non-empty, convex and closed subsets $A_1,A_2,\dots,A_k\subset H$ satisfying
$$F=\{\alpha\in[0,1]:P_{A_k}^\alpha P_{A_{k-1}}^\alpha\dots P_{A_1}^\alpha\text{ has a fixed point}\},$$
\item $0\in F$ and $F$ is an $F_\sigma$ subset of $[0,1]$.
\end{enumerate}
\end{theorem}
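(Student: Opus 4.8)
The plan is to treat the two implications separately, with the bulk of the work in $(ii)\Rightarrow(i)$.

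For $(i)\Rightarrow(ii)$ I would argue as follows. Since $P_A^0=\mathrm{Id}$, the composition at $\alpha=0$ is the identity and so has a fixed point; thus $0\in F$ automatically. For the topological part, write $T_\alpha=P_{A_k}^\alpha\cdots P_{A_1}^\alpha$. Each $P_{A_i}^\alpha=\alpha P_{A_i}+(1-\alpha)\mathrm{Id}$ is a convex combination of the nonexpansive map $P_{A_i}$ and the identity, hence nonexpansive, so $T_\alpha$ is nonexpansive. A nonexpansive self-map of a Hilbert space has a fixed point if and only if it has a bounded orbit: if $p$ is fixed then every orbit stays in the ball of radius $\|x_0-p\|$ about $p$, and conversely the asymptotic center $z$ of a bounded orbit $(T_\alpha^n x_0)$ satisfies $T_\alpha z=z$, because the displacement estimate $\limsup_n\|T_\alpha^n x_0-T_\alpha x\|\le\limsup_n\|T_\alpha^n x_0-x\|$ forces $T_\alpha z$ to be a second minimizer of the asymptotic-center functional, which is unique by uniform convexity. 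Fixing a starting point $x_0$ we therefore obtain
$$F=\bigcup_{m\in\N}\ \bigcap_{n\in\N}\ \{\alpha\in[0,1]:\|T_\alpha^n x_0\|\le m\}.$$
Since $(\alpha,x)\mapsto P_{A_i}^\alpha(x)$ is jointly continuous (as $P_{A_i}$ is continuous), the map $\alpha\mapsto T_\alpha^n x_0$ is continuous for each $n$; hence each set in the intersection is closed, each intersection is closed, and $F$ is $F_\sigma$.

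For $(ii)\Rightarrow(i)$ I would first reduce to $k=3$: padding the list with $A_4=\dots=A_k=H$ changes nothing, since $P_H^\alpha=\mathrm{Id}$. Writing $F=\{0\}\cup\bigcup_{j\ge1}C_j$ with each $C_j\subset[0,1]$ closed, the goal becomes to build three non-empty closed convex sets in a three-dimensional affine subspace of $H$ whose relaxed-projection composition has a fixed point exactly for $\alpha\in F$. Using the orbit-boundedness criterion of the first part in reverse, it suffices to engineer the action of $T_\alpha$ on a single orbit so that the orbit remains bounded precisely when $\alpha\in\bigcup_j C_j$, the value $\alpha=0$ being automatic.

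The construction I would aim for places a sequence of ``stations'' along a curve escaping to infinity and shapes the three convex sets so that $T_\alpha$ transports the orbit outward from station to station, trapping it in a bounded cycle at the $j$-th station when $\alpha\in C_j$ and letting it pass otherwise; the orbit then stays bounded iff it is captured at some finite station, i.e.\ iff $\alpha\in\bigcup_j C_j$. Here the hypotheses $k\ge3$ and $\dim H\ge3$ are exactly what supply the needed geometric freedom, the third map and the extra dimension being used to realise simultaneously the ``transport'' and the $\alpha$-dependent ``switch'' at each station. \emph{This geometric design, together with the verification that capture occurs at station $j$ for precisely the prescribed closed set $C_j$ (and that the behaviour at the relevant boundary values of $\alpha$ is correct), is the main obstacle.} It is also where genuinely curved convex sets must replace affine ones: for affine $A_i$ the composition $T_\alpha$ is affine, and its fixed-point existence is governed by a rigid algebraic degeneracy condition that cannot produce an arbitrary $F_\sigma$ set.
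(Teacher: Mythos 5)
Your argument for (i)$\Rightarrow$(ii) is correct and takes a genuinely different route from the paper. The paper proves that for each $r>0$ the set $F_r$ of those $\alpha$ for which $T_\alpha$ has a fixed point of norm at most $r$ is closed (Proposition \ref{propii}); in infinite dimensions this requires a delicate argument combining weak compactness of balls with Lemma \ref{lemat} to upgrade a weak cluster point of the fixed points $x_n$ to a norm-approximate fixed point of $T_{\alpha_0}$. You instead invoke the classical fact that a nonexpansive self-map of a Hilbert space has a fixed point if and only if one (equivalently, every) orbit is bounded, via uniqueness of the asymptotic center; this turns $F$ into $\bigcup_m\bigcap_n\{\alpha:\|T_\alpha^nx_0\|\le m\}$, with each inner set closed by the joint continuity of $(\alpha,x)\mapsto T_\alpha(x)$. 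This is shorter and bypasses the hardest analytic lemma of the paper; what the paper's version buys in exchange is the quantitative refinement that the fixed point can be kept in a fixed ball, which is not needed for the theorem itself.

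For (ii)$\Rightarrow$(i), however, there is a genuine gap: you state the goal of the construction but not the construction, and you explicitly defer ``the geometric design together with the verification'' as the main obstacle. That design is the substance of Proposition \ref{propi} and occupies most of the paper. Concretely, the paper takes $A_2$ and $A_3$ to be two parallel rays and $A_1$ to be the epigraph in the $y$-direction of a convex $C^2$ function $f(x,z)=x^2+\sum_nc_ng_n(x)h_n(z)$ over the half-plane $z\ge0$. The planar Lemma \ref{lempla} shows that the fixed point of the two-dimensional problem with the parabola $y=x^2$ projects onto the parabola at abscissa $\phi(\alpha)$ for an explicit decreasing homeomorphism $\phi$, and $f$ is engineered so that its graph touches the parabolic cylinder $y=x^2$ for some $z\ge0$ exactly above the abscissas in $\phi(F)$, while $\partial f/\partial z<0$ off the touching set destroys any would-be fixed point by forcing the $z$-coordinate to increase. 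Verifying the convexity of $f$ (positive semidefiniteness of the Hessian via a Schwarz-inequality estimate on the mixed term) and the exact characterisation of the touching set is where the real work lies; no analogue of it appears in your proposal, and your ``stations along an escaping curve'' picture is a different dynamical mechanism that would need its own detailed realisation. As it stands no candidate sets are produced, so this direction is not proved. (Your reduction to $k=3$ by padding with $A_4=\dots=A_k=H$ is fine, and is even slightly simpler than the paper's device of repeating $A_1$ and using $(P_{A_1}^\alpha)^{k-2}=P_{A_1}^{1-(1-\alpha)^{k-2}}$.)
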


It can be shown that if $\dim H=1$ or $k=1$ then the only set $F$ satisfying (i) is $[0,1]$.
If $k=2$ and $\dim H\geq 2$ then two sets $F$ satisfy (i), namely  $\{0\}$ and $[0,1]$.
It proves, that if $H=\mathbb R^2$ and $k\geq 3$ then the class of sets $F$ satisfying (i)
depends on $k$ and its full characterization is still an open problem.

Theorem \ref{gtw} is an immediate consequence of the following two propositions

\begin{proposition}\label{propi}
If $F$ is an $F_\sigma$ subset of $[0,1]$, $0\in F$ and $k\geq3$ then
$$F=\{\alpha\in[0,1]:P_{A_k}^\alpha P_{A_{k-1}}^\alpha\dots P_{A_1}^\alpha\text{ has a fixed point}\},$$
for some non-empty, convex and closed subsets $A_1,A_2,\dots,A_k\subset \mathbb R^3$.
\end{proposition}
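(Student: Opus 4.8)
The plan is to give an explicit construction, and it suffices to treat $k=3$: once three sets $A_1,A_2,A_3\subset\mathbb R^3$ are built, I set $A_4=\dots=A_k=\mathbb R^3$, whose relaxed projections are the identity for every $\alpha$, so that $P^\alpha_{A_k}\cdots P^\alpha_{A_1}=P^\alpha_{A_3}P^\alpha_{A_2}P^\alpha_{A_1}$ and the fixed-point set is unchanged. I would record at the outset the two general facts I intend to use: each factor of $T_\alpha=P^\alpha_{A_3}P^\alpha_{A_2}P^\alpha_{A_1}$ is nonexpansive, hence $T_\alpha$ is nonexpansive, and on the finite-dimensional space $\mathbb R^3$ a nonexpansive map has a fixed point iff it has a bounded orbit (the closed convex hull of a bounded orbit is a compact convex invariant set, to which Brouwer applies); moreover $\alpha\mapsto T_\alpha(x)$ is continuous, so the minimal displacement $d(\alpha)=\inf_{x}\|T_\alpha x-x\|$ is a continuous function of $\alpha$.

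First I would build a single \emph{block}: a configuration realizing a prescribed closed set $C\subset[0,1]$ with $0\in C$ as its fixed-point set. Writing $\mathbb R^3=\mathbb R^2\times\mathbb R$, the idea is to choose $A_1,A_2,A_3$ (convex closed sets such as slabs or wedges, arranged non-concurrently so that translations do not cancel) so that $T_\alpha$ has a \emph{neutral direction} along which it acts as a translation by an amount $g(\alpha)$, while contracting transversally; for the model $T_\alpha(u,v)=(u+g(\alpha),\lambda v)$ with $|\lambda|<1$ an orbit is bounded — equivalently a fixed point exists — precisely when $g(\alpha)=0$. The freedom in the shapes of the convex sets is used to arrange $g(\alpha)=\mathrm{dist}(\alpha,C)$, so that the fixed-point set is exactly $C$; since $\alpha=0$ gives $T_0=\mathrm{id}$, the value $0$ lies automatically in it. Verifying that $g(\alpha)=0$ is not only necessary but \emph{sufficient} (i.e. that the displacement is attained) is where the finite-dimensionality and the piecewise-affine structure of the maps enter.

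Next, to pass to an arbitrary $F_\sigma$ set $F=\{0\}\cup\bigcup_{n\ge1}C_n$ with each $C_n$ closed, I would assemble the blocks along the third coordinate, placing the $n$-th block near height $z=z_n$ with $z_n\to\infty$. Designing $A_1,A_2,A_3$ globally so that, on the slab around $z_n$, the composition reproduces the $n$-th block while transporting any point not trapped there toward the next slab, I arrange that an orbit stays bounded iff it is trapped at some level $n$, which occurs iff $\alpha\in C_n$. Hence $T_\alpha$ has a fixed point iff $\alpha\in\bigcup_n C_n=F$. This union behaviour also explains how a \emph{non-closed} $F$ can arise: for $\alpha\notin F$ the displacement $d(\alpha)$ may still equal $0$, approached only as $z_n\to\infty$ by blocks that \emph{almost} trap, so that $\{d=0\}$ strictly contains the genuine fixed-point set; it is precisely this failure of attainment at infinity that lifts the realizable class from closed sets to all $F_\sigma$ sets.

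The main obstacle is this assembly step. The difficulty is threefold: the construction must live in the fixed space $\mathbb R^3$ with exactly three convex sets, so I cannot take orthogonal products of blocks; I must keep each $A_i$ \emph{convex} while encoding infinitely many independent blocks, which severely constrains how the local gadgets may be glued along the third axis; and I must certify the union dynamics, namely that trapping at a single level is attainable (so one successful level yields a global fixed point) and is the only route to boundedness (so failure at every level forces the orbit to escape to infinity). Once the drift-and-trap behaviour between consecutive levels is made rigorous — controlling how the three relaxed projections move a point across the transition slabs and checking that the transverse contraction does not interfere with the escape — the equivalence with $\alpha\in F$ follows, completing the proof.
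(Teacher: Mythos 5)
Your proposal is a strategy outline rather than a proof: the step you yourself identify as ``the main obstacle'' --- producing three \emph{convex} closed sets in $\mathbb R^3$ that simultaneously encode infinitely many closed sets $C_n$ and realize the drift-and-trap dynamics --- is exactly the content of the paper's proof, and it is not carried out in your text. Neither the single ``block'' is constructed (arranging the translation part to equal $\operatorname{dist}(\alpha,C)$ for an \emph{arbitrary} closed $C$ already requires a curved boundary, not slabs or wedges, since the composition of relaxed projections onto polyhedral sets is piecewise affine with finitely many pieces), nor is the gluing along the $z$-axis, where convexity of each $A_i$ is a global constraint that fights against stacking independent gadgets. The paper resolves both issues at once: $A_2,A_3$ are vertical rays, $A_1=\{(x,y,z):z\geq 0,\ y\geq f(x,z)\}$ is the region above the graph of a single $C^2$ convex function $f(x,z)=x^2+\sum_n c_n g_n(x)h_n(z)$ whose convexity is verified by an explicit Hessian estimate, a planar lemma for the parabola $\{y\geq x^2\}$ produces a continuous decreasing function $\phi$ locating where any fixed point must project onto $A_1$, and a tangent-plane argument shows a fixed point exists iff $f(\phi(\alpha),z)=\phi(\alpha)^2$ for some $z\geq0$, which by construction happens iff $\alpha\in F$. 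Your guiding intuition --- that non-closedness of $F$ is produced by failure of attainment as $z\to\infty$ --- does match the paper's mechanism (there, $\partial f/\partial z<0$ off the trapping set forces the $z$-coordinate of any would-be fixed point to increase), but intuition is not a substitute for the construction.

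Two smaller points. Your reduction from general $k$ to $k=3$ by taking $A_4=\dots=A_k=\mathbb R^3$ is correct and in fact simpler than the paper's device, which repeats $A_1$ and uses $(P_{A_1}^\alpha)^{k-2}=P_{A_1}^{1-(1-\alpha)^{k-2}}$; this is why the paper's key lemma carries two parameters $\alpha,\beta$. On the other hand, your justification of the fact that a nonexpansive self-map of $\mathbb R^3$ with a bounded orbit has a fixed point is flawed: the closed convex hull of an orbit need not be invariant under a nonexpansive map, since such maps are not affine. The fact itself is true (e.g.\ via the asymptotic center of a bounded orbit in Hilbert space), but note that the paper never needs it --- it argues directly about what a fixed point must look like, via the Banach fixed point theorem in the planar lemma and the tangent-plane comparison in $\mathbb R^3$.
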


\begin{proposition}\label{propii}
If $A_1,A_2,\dots,A_k$ are non-empty, convex and closed subsets of a Hilbert space $H$
then for every $r>0$ the set
$$F_r=\{\alpha\in[0,1]:P_{A_k}^\alpha P_{A_{k-1}}^\alpha\dots P_{A_1}^\alpha\text{ has a fixed point $x$ satisfying }\|x\|\leq r\}$$
is closed in $[0,1]$.
\end{proposition}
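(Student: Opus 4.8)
The plan is to prove closedness sequentially. Suppose $\alpha_n\in F_r$ with $\alpha_n\to\alpha$; I want to conclude $\alpha\in F_r$. The case $\alpha=0$ is immediate, since $P_A^0=\mathrm{id}$ makes every $x$ with $\|x\|\le r$ a fixed point; so I assume $\alpha>0$, whence along a tail $\alpha_n$ is bounded away from $0$ and $\inf_n\alpha_n(2-\alpha_n)=:c>0$. For each $n$ I pick a fixed point $x^{(n)}$ with $\|x^{(n)}\|\le r$ and unfold its orbit: put $x^{(n)}_0=x^{(n)}$, $z^{(n)}_i=P_{A_i}x^{(n)}_{i-1}\in A_i$, and $x^{(n)}_i=P^{\alpha_n}_{A_i}x^{(n)}_{i-1}=(1-\alpha_n)x^{(n)}_{i-1}+\alpha_n z^{(n)}_i$, so that $x^{(n)}_k=x^{(n)}=x^{(n)}_0$. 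Writing $e^{(n)}_i=x^{(n)}_{i-1}-z^{(n)}_i$ for the projection residuals, nonexpansiveness of the $\alpha$-projections together with $\|x^{(n)}\|\le r$ yields uniform bounds on all $x^{(n)}_i$, $z^{(n)}_i$ and $e^{(n)}_i$. Hence, after passing to a subsequence, $x^{(n)}_i\rightharpoonup x_i$ weakly (the closed ball is weakly compact), with $x_0=x_k=:x$ and $\|x\|\le r$ by weak lower semicontinuity of the norm.

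The whole difficulty is that projections are not weakly continuous, so I cannot pass to the limit directly in $z^{(n)}_i=P_{A_i}x^{(n)}_{i-1}$. The key step, and the main obstacle, is to upgrade weak convergence of the residuals to \emph{strong} convergence. I obtain this from a rigidity estimate comparing two orbits. Using firm nonexpansiveness of $P_{A_i}$ in the form $\|P_{A_i}u-P_{A_i}v\|^2\le\langle u-v,\,P_{A_i}u-P_{A_i}v\rangle$ and the identity $\|(1-t)p+tq\|^2=(1-t)\|p\|^2+t\|q\|^2-t(1-t)\|p-q\|^2$, one derives for all indices $n,m$ and each $i$ an inequality of the form
$$\|x^{(n)}_i-x^{(m)}_i\|^2\le\|x^{(n)}_{i-1}-x^{(m)}_{i-1}\|^2-\alpha_n(2-\alpha_n)\|e^{(n)}_i-e^{(m)}_i\|^2+C\,|\alpha_n-\alpha_m|,$$
where the error term comes solely from the mismatch of the relaxation parameters and $C$ is uniform thanks to the bounds above. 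Summing over $i=1,\dots,k$ the position terms telescope and cancel because the orbit is closed ($x^{(n)}_k=x^{(n)}_0$), leaving
$$c\sum_{i=1}^{k}\|e^{(n)}_i-e^{(m)}_i\|^2\le kC\,|\alpha_n-\alpha_m|.$$
Since $(\alpha_n)$ is Cauchy, each $(e^{(n)}_i)_n$ is Cauchy in $H$ and therefore converges strongly, say $e^{(n)}_i\to e_i$.

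With strong convergence of the residuals the limit is routine. From $z^{(n)}_i=x^{(n)}_{i-1}-e^{(n)}_i\rightharpoonup x_{i-1}-e_i=:z_i$ and weak closedness of $A_i$ I get $z_i\in A_i$. Passing to the limit in the variational characterisation $\langle e^{(n)}_i,\,w-z^{(n)}_i\rangle\le 0$ (valid for all $w\in A_i$) is now legitimate, because $\langle e^{(n)}_i,z^{(n)}_i\rangle\to\langle e_i,z_i\rangle$ (one factor converges strongly, the other weakly) and $\langle e^{(n)}_i,w\rangle\to\langle e_i,w\rangle$ for fixed $w$; this gives $\langle x_{i-1}-z_i,\,w-z_i\rangle\le 0$ for all $w\in A_i$, that is $z_i=P_{A_i}x_{i-1}$. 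Finally, letting $n\to\infty$ in $x^{(n)}_i=x^{(n)}_{i-1}-\alpha_n e^{(n)}_i$ (again using the strong convergence of $e^{(n)}_i$) yields $x_i=(1-\alpha)x_{i-1}+\alpha P_{A_i}x_{i-1}=P^{\alpha}_{A_i}x_{i-1}$, and composing around the closed orbit gives $P^{\alpha}_{A_k}\cdots P^{\alpha}_{A_1}x=x_k=x$ with $\|x\|\le r$. Hence $\alpha\in F_r$, so $F_r$ is closed in $[0,1]$.
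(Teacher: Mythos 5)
Your proof is correct, and it takes a genuinely different route from the paper's. The paper also starts from weak compactness of the ball, but then splits into a dichotomy: either the fixed points $x_n$ converge in norm (easy case), or after extraction $\|x_n-x_0\|\to\lambda>0$ with $(x_n-x_0,x_m-x_0)\to0$; in the latter case an auxiliary averaging lemma (Lemma~\ref{lemat}) shows that a nonexpansive map almost commutes with long Ces\`aro averages of such an asymptotically orthogonal almost-fixed sequence, which produces points $u_M\to x_0$ in norm with $\|P^{\alpha_0}u_M-u_M\|\to0$, whence $P^{\alpha_0}x_0=x_0$. That argument uses only nonexpansiveness of $P^{\alpha_0}$ and continuity in $\alpha$, so it is ``soft'' and would apply verbatim to any family of nonexpansive maps depending continuously on a parameter. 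You instead exploit the specific structure of relaxed projections: firm nonexpansiveness gives the averagedness estimate $\|P_{A}^{\alpha}u-P_{A}^{\alpha}v\|^2\le\|u-v\|^2-\alpha(2-\alpha)\|e(u)-e(v)\|^2$, the mismatch $P_A^{\alpha_n}v-P_A^{\alpha_m}v=(\alpha_m-\alpha_n)(v-P_Av)$ contributes only an $O(|\alpha_n-\alpha_m|)$ error by the uniform bounds, and telescoping around the closed orbit kills the position terms and forces the residuals $e_i^{(n)}$ to be strongly Cauchy; the passage to the limit via the variational inequality $\langle e_i^{(n)},w-z_i^{(n)}\rangle\le0$ (strong-times-weak convergence of the pairing) is then standard demiclosedness. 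I checked the key inequality and the cancellation; they are right, and the $\alpha=0$ case and the identification $x_0=x_k$ of weak limits are handled. Your approach is more structure-specific but quantitative (it yields $\sum_i\|e_i^{(n)}-e_i^{(m)}\|^2\le\tfrac{kC}{c}|\alpha_n-\alpha_m|$, an explicit modulus), whereas the paper's is less quantitative but more general, since it never uses firm nonexpansiveness.
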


\begin{proof}[Proof of Theorem \ref{gtw}]
To show that (1) implies (2) it is enough to observe
that $P_{A_k}^0 P_{A_{k-1}}^0\dots P_{A_1}^0$ is the identity (hence $0\in F$)
and that $F=\bigcup_{r\in\mathbb N} F_r$, where $F_r$'s are the closed sets defined in Proposition \ref{propii}.

Now, let $F$ be any $F_\sigma$ subset of $[0,1]$, $0\in F$ and let $k\geq3$. 
By Proposition \ref{propi} we have
$$F=\{\alpha\in[0,1]:P_{A_k}^\alpha P_{A_{k-1}}^\alpha\dots P_{A_1}^\alpha\text{ has a fixed point}\},$$
for some non-empty, convex and closed subsets $A_1,A_2,\dots,A_k\subset \mathbb R^3$.
Using any isometric embedding of $\mathbb R^3$ into $H$ we obtain that (2) implies (1).
\end{proof}

\section{Proof of Proposition \ref{propi}}

Proposition \ref{propi} is a consequence of the following lemma
\begin{lemma}\label{lemgl}
If $F$ is an $F_\sigma$ subset of $[0,1]$ and $0\in F$ then
for some non-empty, convex and closed sets $A_1$, $A_2$, $A_3\subset\mathbb R^3$ one has:
\begin{enumerate}
\item if $\alpha\in F$ and $\beta\in[0,1]$ then $P_{A_3}^\alpha P_{A_2}^\alpha P_{A_1}^\beta$ has a fixed point,
\item if $\alpha\in [0,1]\setminus F$ and $\beta\in(0,1]$ then $P_{A_3}^\alpha P_{A_2}^\alpha P_{A_1}^\beta$ has no fixed point.
\end{enumerate}
\end{lemma}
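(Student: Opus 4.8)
The plan is to reduce the lemma to a one–parameter family of planar ``gadgets'' and then to stack infinitely many of them along the third coordinate of $\mathbb{R}^3$. First I would use that $F$ is $F_\sigma$ with $0\in F$ to write $F=\bigcup_{n\ge 1}C_n$, where each $C_n$ is a nonempty compact subset of $[0,1]$ with $0\in C_n$ and $C_n\subset C_{n+1}$. This is dual to the decomposition $F=\bigcup_r F_r$ furnished by Proposition \ref{propii}: the gadget realising $C_n$ would be placed far from the origin (at distance growing with $n$), so that its fixed points have large norm and the union of all gadgets recovers exactly $F=\bigcup_n C_n$. The value $\alpha=0$ is automatic, since $P_{A_i}^0$ is the identity, matching $0\in F$.

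Next I would isolate the role of $\beta$, which is what distinguishes the ranges $[0,1]$ in (i) and $(0,1]$ in (ii). I would arrange the encoding sets so that $A_2\cap A_3\ne\emptyset$, say they share a point $q$. Then $P_{A_3}^\alpha P_{A_2}^\alpha$ fixes $q$ for every $\alpha$, so at $\beta=0$ (where $P_{A_1}^0$ is the identity) the composition always has the trivial fixed point $q$; this is precisely why (ii) must exclude $\beta=0$, and why (i) holds for every $\beta$. I would then position $A_1$ so that $q\notin A_1$, so that for every $\beta\in(0,1]$ the map $P_{A_1}^\beta$ displaces $q$ and destroys the trivial fixed point, leaving the existence question to be decided by the finer, $\alpha$–dependent geometry of $A_2,A_3$.

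For that finer part I would use that each $P_A^\alpha$ is nonexpansive (averaged for $\alpha\in(0,1)$), hence so is $T_{\alpha,\beta}=P_{A_3}^\alpha P_{A_2}^\alpha P_{A_1}^\beta$, so that $T_{\alpha,\beta}$ has a fixed point exactly when it has a bounded orbit. In the relevant plane a fixed point of the composition is a two–periodic configuration between $A_2$ and $A_3$. To make such a $2$–cycle close up exactly for $\alpha\in C$ (with $C$ a prescribed compact set), I would take the active boundary to be the graph of a strictly convex function $h$ and exploit that the exposed points of the closed convex hull of $\{(\alpha,h(\alpha)):\alpha\in C\}$ are precisely the points with $\alpha\in C$: over each complementary interval of $C$ the lower boundary is a straight chord, whose relative interior carries no exposed point. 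Calibrating the sets so that the $2$–cycle at horizontal position $\alpha$ closes only when that position is an exposed point (equivalently, when the supporting slopes of $A_2$ and $A_3$ are compatible with the relaxation factor $\alpha$) gives a fixed point exactly for $\alpha\in C$; for $\alpha$ in a gap the iterate slides along the chord, the orbit drifts, and there is no fixed point.

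Finally I would assemble the global sets in $\mathbb{R}^3$ by stacking the $n$–th gadget in a region around height $z\approx n$ and defining each $A_i$ as the closed convex hull of its pieces, the third dimension providing exactly the room needed to keep all three sets convex while decoupling the blocks. For $\alpha\in F$ some $C_n$ contains $\alpha$ and produces a fixed point in block $n$ for every $\beta$, giving (i); for $\alpha\notin F$ every block drifts, and with $\beta>0$ the trivial fixed point is gone, so no fixed point survives, giving (ii). I expect the main obstacle to be the planar gadget itself: realising an \emph{arbitrary} compact $C$ (not merely an interval) as the exact fixed–point set in $\alpha$, while simultaneously controlling the drift over the gaps and keeping $A_2,A_3$ convex. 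A close second is the infinite convex assembly, namely verifying that taking convex hulls of the stacked pieces neither breaks convexity nor creates spurious bounded orbits linking different blocks.
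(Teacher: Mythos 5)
Your proposal correctly identifies where the difficulty lies, but it does not resolve it: the two steps you yourself flag as ``the main obstacle'' (the planar gadget realising an arbitrary compact set, and the infinite convex assembly) are the entire content of the lemma, and the mechanisms you propose for them are not supported by any argument. First, you never establish a link between the relaxation parameter $\alpha$ and a \emph{position} on the active boundary; this is exactly what the paper's Lemma \ref{lempla} supplies, by writing out the fixed-point equation for $P_{B_3}^\alpha P_{B_2}^\alpha P_{B_1}^\beta$ with $B_1$ the epigraph of $x^2$ and $B_2,B_3$ singletons, and extracting from the orthogonality condition the explicit relation $2x^3+x=\frac{1-\alpha}{2-\alpha}$, which yields a continuous decreasing $\phi$ with contact point $(\phi(\alpha),\phi(\alpha)^2)$ independent of $\beta$. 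Without such a computation your phrase ``when the supporting slopes of $A_2$ and $A_3$ are compatible with the relaxation factor $\alpha$'' has no content, and note that a fixed point of $P_{A_3}^\alpha P_{A_2}^\alpha P_{A_1}^\beta$ is a three-set configuration, not a $2$-cycle between $A_2$ and $A_3$. Second, your claim that over a gap of $C$ ``the iterate slides along the chord, the orbit drifts, and there is no fixed point'' is unproven and is not plausible as stated: a flat face of a convex set is precisely the kind of boundary on which an alternating-projection cycle can close up, and nonexpansiveness only converts ``no fixed point'' into ``unbounded orbit'' --- it does not produce the unbounded orbit for you. You need a monotone quantity that strictly increases along the orbit off the good set; the paper gets one by keeping $A_2,A_3$ as rays parallel to the $z$-axis (so their relaxed projections preserve the $z$-coordinate) and building $A_1$ as the epigraph of a single $C^2$ convex function $f(x,z)=x^2+\sum_n c_ng_n(x)h_n(z)$ with $\frac{\partial f}{\partial z}<0$ wherever $f>x^2$ (Lemma \ref{lemmaf}(iv)), which forces the $z$-coordinate of any would-be fixed point to increase strictly.

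The assembly step has the same problem in sharper form: taking the closed convex hull of gadgets stacked at heights $z\approx n$ creates large new portions of boundary joining consecutive blocks, and you have no control over the projection dynamics there; ruling out ``spurious bounded orbits linking different blocks'' is not a verification left for later but an open-ended obstruction. The paper avoids it entirely because the stacking happens inside one globally convex function: convexity is checked once via the Hessian of the series \eqref{deff}, the contact set $\{f=x^2\}$ encodes $\phi(F_n)$ at heights $z\ge n$, and no hull of separate pieces is ever taken. Your observations about $\beta$ (that $\beta=0$ makes $P_{A_1}^0$ the identity, and that $0\in F$ is forced) are correct but peripheral; also, ``destroying the trivial fixed point'' by displacing one point does not show the composition has \emph{no} fixed point. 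As it stands the proposal is a programme rather than a proof.
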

\begin{proof}[Proof of Proposition \ref{propi}]
Let $F$ be an $F_\sigma$ subset of $[0,1]$ and let $A_1$, $A_2$ and $A_3$ be given by Lemma \ref{lemgl}.
Then $\alpha\in F$ iff $P_{A_3}^\alpha P_{A_2}^\alpha(P_{A_1}^\alpha)^{k-2}=P_{A_3}^\alpha P_{A_2}^\alpha P_{A_1}^{1-(1-\alpha)^{k-2}}$
has a fixed point (we put $\beta=1-(1-\alpha)^{k-2}$).
\end{proof}

The sets $A_1$, $A_2$ and $A_3$ demanded in Lemma \ref{lemgl} will be defined as
$A_1=\{(x,y,z):z\geq0,\ y\geq f(x,z)\}$, $A_2=\{(1,0,z):z\geq0\}$, $A_3=\{(0,0,z):z\geq0\}$
for some continuous convex function $f:\mathbb R\times[0,\infty)\to\mathbb R$.
The construction of the function $f$ will be the main part of the proof.
In particular we will use an auxiliary function $\phi$ defined by the following lemma.

\begin{lemma}\label{lempla}
Let $B_1=\{(x,y):y\geq x^2\}$,
$B_2=\{(1,0)\}$, $B_3=\{(0,0)\}$ be subsets of $\mathbb R^2$.
Then for every $\alpha,\beta\in(0,1]$ the composition $P_{B_3}^\alpha P_{B_2}^\alpha P_{B_1}^\beta$
has a unique fixed point $\mathbf u_{\alpha,\beta}$.
Moreover, there exists a decreasing and continuous function $\phi:(0,1]\to[0,1]$ such that
$P_{B_1}(\mathbf u_{\alpha,\beta})=(\phi(\alpha),\phi(\alpha)^2)$ for every $\alpha,\beta\in(0,1]$.
\end{lemma}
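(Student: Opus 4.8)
The plan is to make every map explicit and reduce the whole statement to a single scalar equation. Since $B_2$ and $B_3$ are singletons, the two outer relaxed projections are affine contractions, $P_{B_3}^\alpha(x)=(1-\alpha)x$ and $P_{B_2}^\alpha(x)=\alpha(1,0)+(1-\alpha)x$, each with Lipschitz constant $1-\alpha$. Because $P_{B_1}^\beta=\beta P_{B_1}+(1-\beta)\,\mathrm{id}$ is a convex combination of two nonexpansive maps it is itself nonexpansive, so the composition $T=P_{B_3}^\alpha P_{B_2}^\alpha P_{B_1}^\beta$ has Lipschitz constant at most $(1-\alpha)^2<1$ for every $\alpha\in(0,1]$. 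The Banach fixed point theorem then yields the existence and uniqueness of $\mathbf u_{\alpha,\beta}$, which settles the first assertion.

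For the second assertion write $\mathbf u_{\alpha,\beta}=(u,v)$. First I would rule out the degenerate case: if $(u,v)\in B_1$, then $P_{B_1}^\beta$ fixes it and the identity $(u,v)=P_{B_3}^\alpha P_{B_2}^\alpha(u,v)$ forces $v=0$ and $u=\tfrac{1-\alpha}{2-\alpha}$, contradicting $v\ge u^2$ unless $\alpha=1$ (where $\mathbf u_{1,\beta}=(0,0)$). Hence for $\alpha\in(0,1)$ the fixed point lies strictly below the parabola, so its projection $P_{B_1}(u,v)=(s,s^2)$ sits on the curve and is characterised by the normal condition $u-s+2s(v-s^2)=0$.

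The heart of the argument is the elimination of $u,v$. Unwinding the fixed point identity gives $u=(1-\alpha)\alpha+(1-\alpha)^2q_1$ and $v=(1-\alpha)^2q_2$ with $(q_1,q_2)=\beta(s,s^2)+(1-\beta)(u,v)$; solving these linear relations expresses $u$ and $v$ as explicit rational functions of $s,\alpha,\beta$. Substituting them into the normal condition and clearing the common denominator, every term carrying the factor $\beta$ cancels, and after dividing by $1-\alpha$ one is left with
$$(1+a)(2s^3+s)=a,\qquad a:=1-\alpha.$$
Since the right-hand side does not involve $\beta$, neither does $s$; this cancellation is the crux of the whole lemma and the step I expect to demand the most careful bookkeeping.

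Finally I would read off the regularity of $\phi$ by rewriting the displayed relation as $2s^3+s=\tfrac{1-\alpha}{2-\alpha}$. The left-hand map $s\mapsto 2s^3+s$ is a strictly increasing continuous bijection of $\mathbb R$, while $\alpha\mapsto\tfrac{1-\alpha}{2-\alpha}$ is strictly decreasing and continuous on $(0,1]$ with values in $[0,\tfrac12)$. Composing the continuous increasing inverse of the former with the latter produces a unique root $s=\phi(\alpha)\in[0,1]$ depending continuously and decreasingly on $\alpha$, and by construction $P_{B_1}(\mathbf u_{\alpha,\beta})=(\phi(\alpha),\phi(\alpha)^2)$; the endpoint $\alpha=1$ fits the same formula because $\phi(1)=0$.
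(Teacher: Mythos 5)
Your proposal is correct and follows essentially the same route as the paper: Banach's fixed point theorem for existence and uniqueness, then elimination of the fixed point via the orthogonality (normal) condition at the projection point, leading to the same scalar equation $2s^3+s=\tfrac{1-\alpha}{2-\alpha}$ and the same monotonicity argument for $\phi$. Your explicit treatment of the degenerate case where the fixed point lies in $B_1$ is a small extra care the paper glosses over, but it does not change the approach.
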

\begin{proof}
The existence and the uniqueness of the fixed point follows by the Banach fixed point theorem
for the contraction $P_{B_3}^\alpha P_{B_2}^\alpha P_{B_1}^\beta$.
Let us denote $\mathbf x_{\alpha,\beta}=(x_{\alpha,\beta},x_{\alpha,\beta}^2)=P_{B_1}(\mathbf u_{\alpha,\beta})$.
Then
\begin{equation*}
\begin{split}
\mathbf u_{\alpha,\beta}&=P_{B_3}^\alpha P_{B_2}^\alpha P_{B_1}^\beta(\mathbf u_{\alpha,\beta})
=P_{B_3}^\alpha P_{B_2}^\alpha((1-\beta)\mathbf u_{\alpha,\beta}+\beta\mathbf x_{\alpha,\beta})\\
&=(1-\alpha)^2(1-\beta)\cdot\mathbf u_{\alpha,\beta}+(1-\alpha)^2\beta\cdot\mathbf x_{\alpha,\beta}+(1-\alpha)\alpha\cdot(1,0)+\alpha\cdot(0,0),
\end{split}
\end{equation*}
hence
\begin{equation}\label{rowlem}
\frac{1-(1-\alpha)^2(1-\beta)}\alpha\cdot(\mathbf u_{\alpha,\beta}-\mathbf x_{\alpha,\beta})=(\alpha-2)\cdot\mathbf x_{\alpha,\beta}+(1-\alpha)\cdot(1,0).
\end{equation}
By $\mathbf x_{\alpha,\beta}=P_{B_1}(\mathbf u_{\alpha,\beta})$ it follows that $\mathbf u_{\alpha,\beta}-\mathbf x_{\alpha,\beta}$
is orthogonal to the tangent to $B_1$ at $\mathbf x_{\alpha,\beta}$, hence
$(\mathbf u_{\alpha,\beta}-\mathbf x_{\alpha,\beta})\perp(1,2x_{\alpha,\beta})$.
From \eqref{rowlem} we obtain
$$((\alpha-2)x_{\alpha,\beta}+(1-\alpha),\ (\alpha-2)x_{\alpha,\beta}^2)\ \cdot\ (1,\ 2x_{\alpha,\beta})=0,$$
which is equivalent to
$$2x_{\alpha,\beta}^3+x_{\alpha,\beta}=\frac{1-\alpha}{2-\alpha}.$$
Since the function $\psi(\alpha)=\frac{1-\alpha}{2-\alpha}$ is decreasing and continuous on $(0,1]$ and
the function $\chi(x)=2x^3+x$ is increasing and continuous on $\mathbb R$ and $\psi((0,1])=[0,\tfrac12)\subset\chi([0,1])$,
we obtain that $x_{\alpha,\beta}=\chi^{-1}(\psi(\alpha))\in[0,1]$ does not depend on $\beta$ and it is the decreasing
and continuous function of $\alpha$. We put $\phi(\alpha):=x_{\alpha,\beta}$.
\end{proof}
Letting $\phi(0)=\lim_{\alpha\to 0}\phi(\alpha)$ we extend $\phi$ to continuous and decreasing $\phi:[0,1]\to[0,1]$.

We pass to the construction of the function $f$ for a given
$F_\sigma$ subset $F\subset[0,1]$ satisfying $0\in F$.
We have $F=\bigcup_{n=1}^\infty F_n$ for some closed sets
$F_1\subset F_2\subset\dots\subset[0,1]$.
Let $E_n=(\mathbb R\setminus(-1,2))\cup\phi(F_n)$ for $n=1,2,\dots$.
(Here the interval $(-1,2)$ may be replaced by any open and bounded set containing $\phi([0,1])$.)
The sets $E_n$ are closed, $\inf E_n=-\infty$ and $\sup E_n=\infty$, hence the functions
$a_n,b_n:\mathbb R\to\mathbb R$ given by
$$a_n(x)=\max(E_n\cap(-\infty,x])\qquad\text{and}\qquad b_n(x)=\min(E_n\cap[x,\infty))$$
are well defined. Note, that if $x\in E_n$ then $a_n(x)=b_n(x)=x$. Otherwise, $(a_n(x),b_n(x))$
is the connected component of $\mathbb R\setminus E_n$ containing $x$. We define
\begin{equation}\label{deff}
f(x,z)=x^2+\sum_{n=1}^\infty c_ng_n(x)h_n(z),
\end{equation}
where
$$g_n(x)=(x-a_n(x))^3(b_n(x)-x)^3\qquad\text{and}\qquad h_n(z)=(n-z)_+^3=\begin{cases}(n-z)^3&\text{for }z\in[0,n]\\0&\text{for }z>n\end{cases}$$
and $(c_n)$ is any sequence with positive terms satisfying $\sum_{n=1}^\infty\frac{81^2}6n^3c_n<1$.

\begin{lemma}\label{lemmaf}
The function $f$ defined by \eqref{deff} satisfies the following conditions:
\begin{enumerate}
\item $f(x,z)\geq x^2$ for every $x\in\mathbb R$ and $z\geq0$,
\item $f\in C^2$ and $f$ is convex,
\item For every $x\in\mathbb R$ one has: $x\in\bigcup_{n=1}^\infty E_n\ \Leftrightarrow\ \exists_{z\geq0}\ f(x,z)=x^2$,
\item For every $x\in\mathbb R$ and $z\geq0$ if $f(x,z)>x^2$ then $\frac{\partial f}{\partial z}(x,z)<0$.
\end{enumerate}
\end{lemma}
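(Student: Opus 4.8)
The plan is to verify the four items by exploiting the sign and support structure of the individual summands $c_ng_n(x)h_n(z)$; items (i), (iii), (iv) are then essentially bookkeeping, while the genuine work is the $C^2$-regularity and the convexity in (ii).

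First I would record two structural facts. Since $F_1\subseteq F_2\subseteq\cdots$ and images preserve inclusions, $\phi(F_1)\subseteq\phi(F_2)\subseteq\cdots$, so the sets $E_n$ increase; consequently $\{n:x\notin E_n\}$ is an initial segment of $\N$ for each fixed $x$. Moreover $g_n\ge0$ vanishes exactly on $E_n$, and $h_n\ge0$ vanishes exactly for $z\ge n$. Item (i) is immediate from $c_n,g_n,h_n\ge0$. For (iii): if $x\in E_N$ then $x\in E_n$ for all $n\ge N$, so only the terms $n<N$ can survive, and all of these vanish once $z\ge N-1$, giving $f(x,z)=x^2$; conversely if $x\notin\bigcup_n E_n$ then for any $z$ I pick $n>z$, whence $g_n(x)>0$ and $h_n(z)>0$ force $f(x,z)>x^2$. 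For (iv): $f(x,z)>x^2$ forces some summand to be positive, i.e. $g_{n_0}(x)>0$ and $z<n_0$ for some $n_0$; since $\partial_z f=\sum_n c_ng_n(x)h_n'(z)$ with $h_n'=-3(n-z)_+^2\le0$, every term is $\le0$ and the $n_0$-th is strictly negative, so $\partial_z f(x,z)<0$.

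For the $C^2$ part of (ii) I would first check that each $g_n$ is $C^2$ on $\R$: on every gap it equals the polynomial $u^3v^3$ with $u=x-a_n(x)$, $v=b_n(x)-x$, and the triple zeros at the endpoints glue it $C^2$ to the zero function on $E_n$, the only delicate points being accumulation points of gaps, where the bounds below force $g_n,g_n',g_n''\to0$. Since all gaps lie in $(-1,2)$, each has length $L\le3$, and one gets elementary uniform bounds $|g_n|,|g_n'|,|g_n''|\le81$ together with $|h_n|\le n^3$, $|h_n'|\le3n^2$, $|h_n''|\le6n$. Hence every partial derivative up to order two of $c_ng_nh_n$ is bounded by a fixed multiple of $n^3c_n$, and the hypothesis $\sum_n\tfrac{81^2}{6}n^3c_n<1$ lets the Weierstrass $M$-test guarantee that $f$ and all its first and second partials arise by termwise differentiation; in particular $f\in C^2$.

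The hard part is convexity. The Hessian is $\begin{pmatrix}2+A&C\\C&B\end{pmatrix}$ with $A=\sum_n c_ng_n''h_n$, $B=\sum_n c_ng_nh_n''\ge0$ (as $g_n\ge0$, $h_n''\ge0$), and $C=\sum_n c_ng_n'h_n'$; the smallness of $(c_n)$ gives $2+A>0$, so PSD reduces to the determinant inequality $(2+A)B\ge C^2$. The key is the identity $(h_n')^2=\tfrac32 h_nh_n''$ (both sides being $9(n-z)^4$ resp. $6(n-z)^4$ for $z<n$), which after Cauchy--Schwarz in $n$ yields
\[
C^2\le\tfrac32\Big(\sum_n c_n\tfrac{(g_n')^2}{g_n}h_n\Big)B .
\]
It then suffices to prove $\sum_n c_n\big(\tfrac32\tfrac{(g_n')^2}{g_n}-g_n''\big)h_n\le2$, where the decisive algebraic computation is
\[
\tfrac32\tfrac{(g_n')^2}{g_n}-g_n''=\tfrac{15}{2}uv(v-u)^2+6u^2v^2\ge0 ,
\]
which is bounded above by $\tfrac94 L^4\le\tfrac94\cdot81$; combined with $h_n\le n^3$ and the summability condition, this bracket-sum stays far below $2$, closing the estimate. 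The obstacle I expect to spend the most care on is exactly this determinant bound: justifying the Cauchy--Schwarz step where $g_n$ vanishes (there $g_n'=0$ as well, so the offending terms drop out) and pinning down the two algebraic identities for $g_n$ and $h_n$ that make the flat $x^2$-direction rigid enough to absorb the cross term $C$.
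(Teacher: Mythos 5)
Your proof is correct, and while items (i), (iii), (iv) and the $C^2$/Weierstrass argument coincide with the paper's, your treatment of the decisive determinant inequality takes a genuinely different route. The paper simply discards the $A$-term (using $2+\sum_nc_ng_n''h_n>1$), writes the resulting bound $1\cdot B-C^2\geq\bigl(\sum_n\tfrac{81^2}6n^3c_n\bigr)B-C^2$, and closes it by Cauchy--Schwarz over $n$ after the crude splitting $9c_nu_n^2v_n^2|v_n-u_n|w_n^2\leq\bigl(\tfrac{81^2n^3c_n}6\bigr)^{1/2}\bigl(6c_nu_n^3v_n^3w_n\bigr)^{1/2}$ obtained from $u_n,v_n,|v_n-u_n|\leq3$ and $w_n\leq n$. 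You instead keep $A$, exploit the exact identity $(h_n')^2=\tfrac32h_nh_n''$, apply Cauchy--Schwarz with the weight $g_n$ (correctly noting that terms with $g_n=0$ drop out since then $g_n'=0$), and reduce everything to the pointwise algebraic identity $\tfrac32(g_n')^2/g_n-g_n''=\tfrac{15}2uv(v-u)^2+6u^2v^2$, which I have checked (with $g_n'=3u^2v^2(v-u)$ and $g_n''=6uv(v-u)^2-6u^2v^2$); your final numerical bound also closes, since $\tfrac94\cdot81\cdot\sum_nn^3c_n<\tfrac{729}4\cdot\tfrac6{81^2}=\tfrac16<2$. Your version is more structural -- it isolates exactly which combination of $g_n$ and $h_n$ must be controlled and shows it is a manifestly nonnegative quantity -- at the cost of the division by $g_n$ and the extra algebra; the paper's version is blunter but avoids any case distinction at zeros of $g_n$ and feeds the summability hypothesis in directly. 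The only point needing the same care in both write-ups is the $C^2$-smoothness of $g_n$ at accumulation points of gaps of $E_n$, which you flag explicitly and the paper leaves implicit.
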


\begin{proof} We have
$$g'_n(x)=3(x-a_n(x))^2(b_n(x)-x)^2(a_n(x)+b_n(x)-2x),\qquad h'_n(z)=-3((n-z)_+)^2,$$
$$g''_n(x)=6(x-a_n(x))(b_n(x)-x)[(a_n(x)+b_n(x)-2x)^2-(x-a_n(x))(b_n(x)-x)]\qquad\text{and}\qquad h''_n(z)=6(n-z)_+.$$
For every $x\in\mathbb R$ and $z>0$ one has
$$|g_n(x)|\leq 3^6,\qquad |g'_n(x)|\leq 3^6,\qquad |g''_n(x)|\leq6\cdot3^4,$$
$$|h_n(z)|\leq n^3,\qquad |h'_n(z)|\leq 3n^2\quad\text{and}\quad h''_n(z)\leq 6n.$$
It follows, that the series \eqref{deff} is uniformly convergent.
Moreover, if we try to calculate the first and the second order derivatives of $f$
by the formal differentiation of the series \eqref{deff} term by term
then we obtain a uniformly convergent series with continuous terms.
It follows that $f$ is well defined and $f\in C^2$.
Since $g_n(x),h_n(z)\geq0$ we get (i).

We will check the convexity of $f$ by showing that
the Hessian matrix $H(f)(x,z)$ is positive semidefinite for every $x\in\mathbb R$ and $z>0$.
$$H(f)(x,z)=
\begin{pmatrix}
2+\sum_{n=1}^\infty c_ng''_n(x)h_n(z)
&\sum_{n=1}^\infty c_ng'_n(x)h'_n(z)\\
\sum_{n=1}^\infty c_ng'_n(x)h'_n(z)
&\sum_{n=1}^\infty c_ng_n(x)h''_n(z)
\end{pmatrix}.$$
Clearly $\sum_{n=1}^\infty c_ng_n(x)h''_n(z)\geq0$ and
$$2+\sum_{n=1}^\infty c_ng''_n(x)h_n(z)\geq 2-\sum_{n=1}^\infty c_n\cdot6\cdot3^4\cdot n^3>2-\sum_{n=1}^\infty\frac{81^2}6n^3c_n>1.$$
Moreover,
\begin{equation*}
\begin{split}
\det H(f)(x,z)=&\left(2+\sum_{n=1}^\infty c_ng''_n(x)h_n(z)\right)\left(\sum_{n=1}^\infty c_ng_n(x)h''_n(z)\right)-\left(\sum_{n=1}^\infty c_ng'_n(x)h'_n(z)\right)^2\\
\geq&1\cdot\sum_{n=1}^\infty c_ng_n(x)h''_n(z)-\left(\sum_{n=1}^\infty c_ng'_n(x)h'_n(z)\right)^2\\
\geq&\sum_{n=1}^\infty \frac{81^2}6n^3c_n\cdot\sum_{n=1}^\infty 6c_n(x-a_n(x))^3(b_n(x)-x)^3(n-z)_+\\
&\qquad\qquad-\left(\sum_{n=1}^\infty 9c_n(x-a_n(x))^2(b_n(x)-x)^2(a_n(x)+b_n(x)-2x)((n-z)_+)^2\right)^2\\
\geq&\sum_{n=1}^\infty\frac{81^2}6n^3c_n\cdot\sum_{n=1}^\infty 6c_n(x-a_n(x))^3(b_n(x)-x)^3(n-z)_+\\
&\qquad\qquad-\left(\sum_{n=1}^\infty 81n^{\tfrac32}c_n(x-a_n(x))^{\tfrac32}(b_n(x)-x)^{\tfrac32}((n-z)_+)^{\tfrac12}\right)^2\geq0
\end{split}
\end{equation*}
In the above we used inequalities $0\leq x-a_n(x)\leq3$, $0\leq b_n(x)-x\leq3$ (hence $|a_n(x)+b_n(x)-2x|\leq3$) and, finally, the Schwartz inequality.
We obtained that the Hessian matrix $H(f)(x,z)$ is positive semidefinite, hence we have (ii).

Now, we will show (iii). 
If $x\in\bigcup_{n=1}^\infty E_n$ then $x\in E_{n_0}$ for some $n_0$ and (since $E_1\subset E_2\subset\dots$)
$x\in E_n$ for every $n\geq n_0$. If $x\in E_n$ then $g_n(x)=0$ (by the definition of $g_n$).
Consequently, for every $z>n_0$ we have
$$f(x,z)=x^2+\sum_{n<n_0}c_ng_n(x)\cdot0+\sum_{n\geq n_0} c_n\cdot0\cdot h_n(z)=x^2.$$
If $x\notin\bigcup_{n=1}^\infty E_n$ then $g_n(x)>0$ for every $n$.
Let $z\geq0$. Then $z<n_0$ (hence $h_{n_0}(z)>0$) for some $n_0$ and we have
$$f(x,z)\geq x^2+c_{n_0}g_{n_0}(x)h_{n_0}(z)>x^2.$$

Finally, we will show (iv). First observe that for every $n$, $x$ and $z$ we have $\frac{\partial c_ng_n(x)h_n(z)}{\partial z}(x,z)\leq0$.
It follows that if $f(x,z)>x^2$ then $g_{n_0}(x)>0$ and $h_{n_0}(z)>0$ for some $n_0$ and
$\frac{\partial f}{\partial z}(x,z)\leq\frac{\partial c_{n_0}g_{n_0}(x)h_{n_0}(z)}{\partial z}(x,z)<0$
\end{proof}

\begin{proof}[Proof of Lemma \ref{lemgl}]
Let $F$ be an $F_\sigma$ subset of $[0,1]$ satisfying $0\in F$. We define the sets $A_1,A_2,A_3\subset \mathbb R^3$ as follows:
\begin{equation*}
\begin{split}
A_1&=\{(x,y,z):z\geq0,\ y\geq f(x,z)\},\\
A_2&=\{(1,0,z):z\geq0\},\\
A_3&=\{(0,0,z):z\geq0\},
\end{split}
\end{equation*}
where the continuous convex function $f:\mathbb R\times[0,\infty)\to\mathbb R$ is defined by \eqref{deff}.
Moreover, let $A'_1=\{(x,y,z):z\geq0,\ y\geq x^2\}$.

If $\alpha=0$ and $\beta\in[0,1]$ then $P_{A_3}^\alpha P_{A_2}^\alpha P_{A_1}^\beta=P_{A_1}^\beta$
and every $\mathbf u\in A_1$ is a fixed point of $P_{A_3}^\alpha P_{A_2}^\alpha P_{A_1}^\beta$.

Let $\alpha\in F\setminus\{0\}$ and $\beta\in[0,1]$. Then $\phi(\alpha)\in\bigcup_{n=1}^\infty E_n$
and by Lemma \ref{lemmaf} (iii) there exists $z\geq0$ such that $f(\phi(\alpha),z)=\phi(\alpha)^2$,
i.e. $(\phi(\alpha),\phi(\alpha)^2,z)\in A_1$.
Let $(u,v)=\mathbf u_{\alpha,\beta}$ be the fixed point of $P_{B_3}^\alpha P_{B_2}^\alpha P_{B_1}^\beta$ given in Lemma \ref{lempla}.
Then $(u,v,z)$ is a fixed point of $P_{A_3}^\alpha P_{A_2}^\alpha P_{A'_1}^\beta$
(because $A'_1=B_1\times[0,\infty)$, $A_2=B_2\times[0,\infty)$, $A_3=B_3\times[0,\infty)$ and $z\geq0$).
Moreover, $A_1\subset A_1'$ (by Lemma \ref{lemmaf} (i)) and $P_{A'_1}(u,v,z)=(\phi(\alpha),\phi(\alpha)^2,z)\in A_1$.
If follows that $P_{A_1}(u,v,z)=P_{A'_1}(u,v,z)$, hence $(u,v,z)$ is a fixed point of $P_{A_3}^\alpha P_{A_2}^\alpha P_{A_1}^\beta$.

Finally, let $\alpha\in[0,1]\setminus F$ and let $\beta\in(0,1]$.
Assume, aiming at a contradiction, that $(u,v,z)$ is the fixed point of $P_{A_3}^\alpha P_{A_2}^\alpha P_{A_1}^\beta$.
Since $\alpha,\beta>0$, we obtain that $(u,v,z)$ is ouside the set $A_1$ and $z\geq0$
which means that $(u_1,v_1,z_1):=P_{A_1}(u,v,z)$ satisfies $v_1=f(u_1,z_1)$.

If $v_1=f(u_1,z_1)>u_1^2$ then by Lemma~\ref{lemmaf} (iv) we have
$\frac{\partial f}{\partial z}(u_1,z_1)<0$. Consequently $z_1>z$.
It follows that the last coordinate of $P_{A_3}^\alpha P_{A_2}^\alpha P_{A_1}^\beta(u,v,z)$
which is equal to the last coordinate of $P_{A_1}^\beta(u,v,z)=\beta z_1+(1-\beta)z$ is greater than $z$.
We obtained a contradiction, since $(u,v,z)$ is a fixed point of $P_{A_3}^\alpha P_{A_2}^\alpha P_{A_1}^\beta$.

Thus $v_1=f(u_1,z_1)=u_1^2$, hence $(u_1,v_1,z_1)$ is located at the bounadaries of both $A_1$ and $A'_1$.
Since for both $A_1$ and $A'_1$ there exist tangent planes at $(u_1,v_1,z_1)$ and $A_1\subset A_1'$,
it follows that these two planes are equal. Consequently, $P_{A'_1}(u,v,z)=P_{A_1}(u,v,z)=(u_1,v_1,z_1)$,
hence $(u,v,z)$ is a fixed point of $P_{A_3}^\alpha P_{A_2}^\alpha P_{A'_1}^\beta$.
By Lemma \ref{lempla} we obtain $(u_1,v_1,z_1)=(u_1,f(u_1,z_1),z_1)=(\phi(\alpha),\phi(\alpha)^2,z_1)$,
in particular $f(\phi(\alpha),z_1)=\phi(\alpha)^2$.
Finally, by Lemma \ref{lemmaf} (iii) we get $\phi(\alpha)\in\bigcup_{n=1}^\infty E_n$, hence $\alpha\in F$.
We got the contradiction. Hence $P_{A_3}^\alpha P_{A_2}^\alpha P_{A_1}^\beta$ has no fixed point, as required.
\end{proof}

\section{Proof of Proposition \ref{propii}}

Let $P^\alpha:=P_{A_k}^\alpha P_{A_{k-1}}^\alpha\dots P_{A_1}^\alpha$.

If $\dim H<\infty$ then Proposition \ref{propii} is a consequence of the compactess
of the closed balls in $H$. Indeed, let $\alpha_1,\alpha_2,\dots\in F_r$ and let $\alpha_n\to\alpha_0$.
Then for every $n$ we have $P^{\alpha_n} x_n=x_n$
for some $x_n\in H$ satisfying $\|x_n\|\leq r$. Considering subsequences of $(x_n)$ and $(\alpha_n)$
we may assume that $(x_n)$ is convergent to some $x_0\in H$ with $\|x_0\|\leq r$. Using the continuity of the function
$(\alpha,x)\mapsto P^\alpha x$ we obtain
$$P^{\alpha_0}x_0=\lim_{n\to\infty} P^{\alpha_n}x_n=\lim_{n\to\infty}x_n=x_0.$$
It follows that $\alpha_0\in F_r$ hence $F_r$ is closed.

If $\dim H=\infty$ then the ball in $H$ is not compact and the above reasoning does not work.
One idea is to consider the weak topology on $H$ (instead of the norm topology).
Unfortunately it still does not work, because the projection onto a closed convex set in $H$
does not need to be weakly continuous. For these reasons if $\dim H=\infty$ then the proof is more complicated.
The idea is as follows: Using the compactness of a closed ball in the weak topology
we will find $x_0$ which is a condensation point in the weak topology of the defined above sequence $(x_n)$ and then we will construct
a sequence $(u_M)$ satisfying $\|u_M-x_0\|\to0$ and $\|P^{\alpha_0}(u_M)-u_M\|\to0$ for $M\to\infty$. Then, by the continuity
of $x\mapsto P^{\alpha_0} x$ in the norm topology we obtain $P^{\alpha_0}(x_0)=x_0$.

\begin{lemma}\label{lemat}
Let $M\in\mathbb N$ and let $(y_i^n)_{i=1,\dots,M}^{n\in\mathbb N}$ and $(y^n)_{n\in\mathbb N}$ be systems of elements of $H$ satisfying:
\begin{enumerate}
\item $\lim_{n\to\infty}\|y_i^n\|=1$ for $i=1,\dots,M$,
\item $\lim_{n\to\infty}(y_i^n,y_j^n)=0$ for $i\neq j$, $i,j=1,\dots,M$,
\item $\limsup_{n\to\infty}\|y^n-y_i^n\|^2\leq\frac{M-1}M$ for $i=1,\dots,M$.
\end{enumerate}
Then $\lim_{n\to\infty}\|y^n-\frac{y_1^n+\dots+y_M^n}M\|=0$.
\end{lemma}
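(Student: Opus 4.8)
The plan is to work entirely with squared norms and to exploit that the hypotheses force the average $s^n:=\frac{y_1^n+\dots+y_M^n}{M}$ to behave asymptotically like the centroid of an orthonormal system. First I would compute the norm of this average directly. Expanding
$$\|s^n\|^2=\frac1{M^2}\sum_{i,j=1}^M (y_i^n,y_j^n)$$
and splitting off the diagonal, conditions (i) and (ii) give $\|s^n\|^2\to\frac1M$, since the $M$ diagonal terms tend to $1$ and the $M^2-M$ off-diagonal terms tend to $0$.

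Next I would rewrite hypothesis (iii) in a summed form. For each $i$ one has $\|y^n-y_i^n\|^2=\|y^n\|^2-2(y^n,y_i^n)+\|y_i^n\|^2$; summing over $i=1,\dots,M$ and using $\sum_i(y^n,y_i^n)=M(y^n,s^n)$ yields
$$\sum_{i=1}^M\|y^n-y_i^n\|^2=M\|y^n\|^2-2M(y^n,s^n)+\sum_{i=1}^M\|y_i^n\|^2.$$
On the other hand, (iii) together with the subadditivity of $\limsup$ gives $\limsup_{n\to\infty}\sum_{i=1}^M\|y^n-y_i^n\|^2\leq M\cdot\frac{M-1}M=M-1$. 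Since $\sum_i\|y_i^n\|^2\to M$ by (i), this convergent summand may be separated from the $\limsup$, and I obtain $\limsup_{n\to\infty}\bigl[M\|y^n\|^2-2M(y^n,s^n)\bigr]\leq(M-1)-M=-1$, that is, $\limsup_{n\to\infty}\bigl[\|y^n\|^2-2(y^n,s^n)\bigr]\leq-\frac1M$.

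Finally I would assemble the target quantity $\|y^n-s^n\|^2=\|y^n\|^2-2(y^n,s^n)+\|s^n\|^2$. Because $\|s^n\|^2\to\frac1M$ is convergent, it again separates out of the $\limsup$, giving $\limsup_{n\to\infty}\|y^n-s^n\|^2\leq-\frac1M+\frac1M=0$. As $\|y^n-s^n\|^2\geq0$, this forces $\lim_{n\to\infty}\|y^n-s^n\|=0$, which is the assertion. The argument is short, and the only real care needed is the bookkeeping with $\limsup$: the two manipulations above rest on the elementary fact that $\limsup_n(a_n+b_n)=\limsup_n a_n+\lim_n b_n$ whenever $(b_n)$ converges, applied once with $b_n=\sum_i\|y_i^n\|^2$ and once with $b_n=\|s^n\|^2$. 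I expect that to be the main (mild) obstacle — there is no compactness or passage to subsequences here, and the asymptotic orthogonality (ii) enters only through the clean limit $\|s^n\|^2\to\frac1M$.
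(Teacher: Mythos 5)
Your proof is correct, and it takes a genuinely cleaner route than the paper's. Both arguments hinge on the same key move --- summing hypothesis (iii) over $i=1,\dots,M$ so that the individual bounds $\frac{M-1}{M}$ accumulate to exactly $M-1$, which is then seen to leave no room for $y^n$ to deviate from the average $s^n$. The paper, however, executes this in coordinates: it decomposes $y^n=z^n+\sum_{i=1}^M\alpha_i^ny_i^n$ with $z^n$ orthogonal to the $y_i^n$'s, first proves the coefficients $\alpha_i^n$ are bounded, expands each $\|y^n-y_l^n\|^2$ in these coordinates, sums over $l$, and completes the square to conclude $\limsup_n\bigl(M\|z^n\|^2+M\sum_i(\alpha_i^n-\tfrac1M)^2\bigr)\leq0$, hence $z^n\to0$ and $\alpha_i^n\to\tfrac1M$. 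That version yields the slightly stronger information that the individual coordinates converge, but it needs the decomposition to make sense (the $y_i^n$ must be linearly independent for large $n$, which follows from near-orthonormality) and an extra boundedness step. You bypass all of that by working coordinate-free with the identity $\|y^n-s^n\|^2=\|y^n\|^2-2(y^n,s^n)+\|s^n\|^2$ and the computation $\|s^n\|^2\to\tfrac1M$; the only delicate points are the two $\limsup$ separations, and your invocation of $\limsup_n(a_n+b_n)=\limsup_na_n+\lim_nb_n$ for convergent $(b_n)$ is exactly what is needed (note that $\|y^n\|^2-2(y^n,s^n)=\|y^n-s^n\|^2-\|s^n\|^2$ is bounded below, so no $\pm\infty$ pathology arises). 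Your argument is shorter and avoids the linear-independence issue entirely, at the cost of not exhibiting the limiting coefficients explicitly --- which the paper does not need anyway.
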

\begin{proof}
We have $y^n=z^n+\sum_{i=1}^M\alpha_i^ny_i^n$
for some $\alpha_i^n\in\mathbb R$ and $z^n\in H$ with $z_n\perp y_i^n$ for $i=1,\dots,M$.
Then, by (i), (ii) and (iii), for large enough $n$ one has
\begin{equation*}
\begin{split}
4&>(\|y^n-y_i^n\|+\|y_i^n\|)^2\geq\|y^n\|^2\geq\left\|\sum_{i=1}^M\alpha_i^ny_i^n\right\|
=\sum_{i=1}^M(\alpha_i^n)^2\|y_i^n\|^2+\sum_{i\neq j}\alpha_i^n\alpha_j^n(y_i^n,y_j^n)\\
&\geq\sum_{i=1}^M(\alpha_i^n)^2\|y_i^n\|^2-\sum_{i\neq j}\frac{(\alpha_i^n)^2+(\alpha_j^n)^2}2|(y_i^n,y_j^n)|
=\sum_{i=1}^M(\alpha_i^n)^2\left(\|y_i^n\|^2-\sum_{j\neq i}|(y_i^n,y_j^n)|\right)
\geq\frac12\sum_{i=1}^M(\alpha_i^n)^2.
\end{split}
\end{equation*}
It follows that all $\alpha_i^n$'s are bounded.
Moreover, for every $l=1,\dots,M$ one has
$$\|y^n-y_l^n\|^2=\|z^n\|^2+\sum_{i\neq l}(\alpha_i^n)^2\|y_i^n\|^2+(\alpha_l^n-1)^2\|y_l^n\|^2
+\sum_{i\neq j,\ i,j\neq l}\alpha_i^n\alpha_j^n(y_i^n,y_j^n)+\sum_{i\neq l}\alpha_i^n(\alpha_l^n-1)(y_i^n,y_l^n),$$
hence (taking $\limsup$ in the above and using (i), (ii) and (iii) and the boundedness of $\alpha_i^n$'s)
$$\limsup_{n\to\infty}\left(\|z^n\|^2+\sum_{i=1}^M(\alpha_i^n)^2-2\alpha_l^n+1\right)\leq\frac{M-1}M.$$
Summing the above inequalities with $l=1,\dots,M$ we obtain
$$\limsup_{n\to\infty}\left(M\|z^n\|^2+M\sum_{i=1}^M(\alpha_i^n)^2-2\sum_{l=1}^M\alpha_l^n+M\right)\leq M-1,$$
which is equivalent to
$$\limsup_{n\to\infty}\left(M\|z^n\|^2+M\sum_{i=1}^M\left(\alpha_i^n-\frac1M\right)^2\right)\leq0.$$
It follows that $\lim_{n\to\infty}\|z^n\|=0$ and $\lim_{n\to\infty}\alpha_i^n=\tfrac1M$ for $i=1,\dots,M$,
hence
$$\left\|y_n-\frac{y_1^n+\dots+y_M^n}M\right\|\leq\|z_n\|+\sum_{i=1}^M|\alpha_i^n-\tfrac1M|\|y_i^n\|\to0.$$
\end{proof}

We are ready to proove Proposition \ref{propii} in the general case.
Let $\alpha_1,\alpha_2,\dots\in F_r$ and let $\alpha_n\to\alpha_0$.
For every $n$ let $x_n\in H$ satisfy $P^{\alpha_n} x_n=x_n$ and $\|x_n\|\leq r$.
Considering subsequences of $(x_n)$ and $(\alpha_n)$
we may assume that $(x_n)$ is weakly convergent to some $x_0\in H$ with $\|x_0\|\leq r$.
Again, considering subsequences of $(x_n)$ and $(\alpha_n)$ we may assume that:
\begin{itemize}
\item $\lim_{n\to\infty}\|x_n-x_0\|=0$,

or
\item $\lim_{n\to\infty}\|x_n-x_0\|=\lambda$ for some $\lambda>0$ and $(x_n-x_0,x_m-x_0)\to0$ when $n,m\to 0$.
\end{itemize}

If $\lim_{n\to\infty}\|x_n-x_0\|=0$ then (similarly as in finite dimensional case) by the continuity of the function
$(\alpha,x)\mapsto P^\alpha x$ we obtain
$$P^{\alpha_0}x_0=\lim_{n\to\infty} P^{\alpha_n}x_n=\lim_{n\to\infty}x_n=x_0$$
and we are done.

Otherwise (if $\lim_{n\to\infty}\|x_n-x_0\|=\lambda$ for some $\lambda>0$ and $(x_n-x_0,x_m-x_0)\to0$ when $n,m\to 0$)
we proceed as follows: For any fixed $M\in\mathbb N$ we define
$$y_i^n=\tfrac1\lambda\left(P^{\alpha_0}(x_{n+i})-x_0\right)\qquad\text{for }n\in\mathbb N\text{ and }i=1,\dots,M,$$
$$y^n=\tfrac1\lambda\left(P^{\alpha_0}\left(\frac{x_{n+1}+\dots+x_{n+M}}M\right)-x_0\right)\qquad\text{for }n\in\mathbb N.$$
We will check that $(y_i^n)_{i=1,\dots,M}^{n\in\mathbb N}$ and $(y^n)_{n\in\mathbb N}$ satisfy the assumptions of Lemma \ref{lemat}.

(i). We have
$$\|y_i^n\|=\left\|\tfrac1\lambda\left(P^{\alpha_0}(x_{n+i})-x_0\right)\right\|
=\left\|\frac{x_{n+i}-x_0}\lambda+\frac{P^{\alpha_0}(x_{n+i})-x_{n+i}}\lambda\right\|,$$
which yields (i), because
$\left\|\frac{x_{n+i}-x_0}\lambda\right\|\to1$
and $\|P^{\alpha_0}(x_{n+i})-x_{n+i}\|=\|P^{\alpha_0}(x_{n+i})-P^{\alpha_{n+i}}(x_{n+i})\|\to0$.

Similarly (by $(x_{n+i}-x_0,x_{n+j}-x_0)\to0$ for $i\neq j$ and $n\to\infty$) we obtain (ii).

(iii). We have
\begin{equation*}
\begin{split}
\|y^n-y_i^n\|^2
&=\left\|\frac{P^{\alpha_0}\left(\frac{x_{n+1}+\dots+x_{n+M}}M\right)-P^{\alpha_0}(x_{n+i})}\lambda\right\|^2
\leq\frac1{\lambda^2}\left\|\frac{x_{n+1}+\dots+x_{n+M}}M-x_{n+i}\right\|^2\\
&=\frac1{\lambda^2}\left\|\sum_{j\neq i}\frac1M(x_{n+j}-x_0)-\frac{M-1}M(x_{n+i}-x_0)\right\|^2
\end{split}
\end{equation*}
and by 
$\|x_{n+i}-x_0\|\to\lambda$ and $(x_{n+i}-x_0,x_{n+j}-x_0)\to0$ for $i\neq j$ and $n\to\infty$ we obtain
$$\limsup_{n\to\infty}\|y^n-y_i^n\|^2
\leq\frac1{\lambda^2}\left(\sum_{j\neq i}\frac1{M^2}\lambda^2+\frac{(M-1)^2}{M^2}\lambda^2\right)=\frac{M-1}M.$$

By Lemma \ref{lemat} we obtain
\begin{equation}\label{nierown}
\lim_{n\to\infty}\left\|P^{\alpha_0}\left(\frac{x_{n+1}+\dots+x_{n+M}}M\right)-\frac{P^{\alpha_0}(x_{n+1})+\dots+P^{\alpha_0}(x_{n+M})}M\right\|=0$$
which (by $\|P^{\alpha_0}(x_{n+i})-x_{n+i}\|\to0$) is equivalent to
$$\lim_{n\to\infty}\left\|P^{\alpha_0}\left(\frac{x_{n+1}+\dots+x_{n+M}}M\right)-\frac{x_{n+1}+\dots+x_{n+M}}M\right\|=0.
\end{equation}

On the other hand, for large enough $n$ we have
$$\left\|\frac{x_{n+1}+\dots+x_{n+M}}M-x_0\right\|^2=\frac1{M^2}\left(\sum_{i=1}^M\|x_{n+i}-x_0\|^2+\sum_{i\neq j}(x_{n+i}-x_0,x_{n+j}-x_0)\right)
<\frac{2\lambda^2}M.$$
By the above inequality and by \eqref{nierown} it follows that choosing large enough $n$
and letting
$$u_M:=\frac{x_{n+1}+\dots+x_{n+M}}M$$
we have $\|P^{\alpha_0}(u_M)-u_M\|<\frac1M$ and $\|u_M-x_0\|<\lambda\sqrt{\frac2M}$.

We constructed the sequence $(u_M)$ satisfying $\|u_M-x_0\|\to0$ and $\|P^{\alpha_0}(u_M)-u_M\|\to0$ for $M\to\infty$.
Hence
$P^{\alpha_0}(x_0)=\lim_{M\to\infty}P^{\alpha_0}(u_M)=\lim_{M\to\infty}u_M=x_0$.
Thus $\alpha_0\in F_r$ and $F_r$ is closed.


\end{document}